\documentclass{amsart}
\usepackage{amssymb,amsthm,amsmath,epsfig,latexsym,xypic,supertabular}
\usepackage{calc}
\usepackage{latexsym}
\usepackage{amscd,amssymb,subfigure,hyperref}
\usepackage[arrow,matrix,graph,frame,poly,arc,tips]{xy}
\usepackage{color}
\setlength{\oddsidemargin}{.35in}
\setlength{\evensidemargin}{.25in}
\setlength{\textwidth}{5.8in}

\begin{document}

\newcommand{\mmbox}[1]{\mbox{${#1}$}}
\newcommand{\affine}[1]{\mmbox{{\mathbb A}^{#1}}}
\newcommand{\Ann}[1]{\mmbox{{\rm Ann}({#1})}}
\newcommand{\caps}[3]{\mmbox{{#1}_{#2} \cap \ldots \cap {#1}_{#3}}}
\newcommand{\N}{{\mathbb N}}
\newcommand{\Z}{{\mathbb Z}}
\newcommand{\Q}{{\mathbb Q}}
\newcommand{\R}{{\mathbb R}}
\newcommand{\KK}{{\mathbb K}}
\newcommand{\A}{{\mathcal A}}
\newcommand{\B}{{\mathcal B}}
\newcommand{\OO}{{\mathcal O}}
\newcommand{\C}{{\mathbb C}}
\newcommand{\PP}{{\mathbb P}}
\newcommand{\OS}{{T^d(X,p)}}

\newcommand{\Gor}{\mathop{\rm Gor}\nolimits}
\newcommand{\reg}{\mathop{\rm reg}\nolimits}
\newcommand{\charr}{\mathop{\rm char}\nolimits}
\newcommand{\ann}{\mathop{\rm ann}\nolimits}
\newcommand{\gin}{\mathop{\rm gin}\nolimits}
\newcommand{\Tor}{\mathop{\rm Tor}\nolimits}
\newcommand{\Ext}{\mathop{\rm Ext}\nolimits}
\newcommand{\Hom}{\mathop{\rm Hom}\nolimits}
\newcommand{\Sym}{\mathop{\rm Sym}\nolimits}
\newcommand{\im}{\mathop{\rm im}\nolimits}
\newcommand{\rk}{\mathop{\rm rk}\nolimits}
\newcommand{\codim}{\mathop{\rm codim}\nolimits}
\newcommand{\supp}{\mathop{\rm supp}\nolimits}
\newcommand{\coker}{\mathop{\rm coker}\nolimits}
\newcommand{\st}{\mathop{\rm st}\nolimits}
\newcommand{\lk}{\mathop{\rm lk}\nolimits}

\def\hess{\mathrm{hess}}
\def\Hess{\mathrm{Hess}}

\sloppy

\newtheorem{thm}{Theorem}[section]
\newtheorem*{thm*}{Theorem}
\newtheorem{defn}[thm]{Definition}
\newtheorem{prop}[thm]{Proposition}
\newtheorem{pref}[thm]{}
\newtheorem*{prop*}{Proposition}
\newtheorem{conj}[thm]{Conjecture}
\newtheorem{lem}[thm]{Lemma}
\newtheorem{rmk}[thm]{Remark}
\newtheorem{cor}[thm]{Corollary}
\newtheorem{notation}[thm]{Notation}
\newtheorem{exm}[thm]{Example}
\newtheorem{comp}[thm]{Computation}
\newtheorem{quest}[thm]{Question}
\newtheorem{prob}[thm]{Problem}

\newcommand{\msp}{\renewcommand{\arraystretch}{.5}}
\newcommand{\rsp}{\renewcommand{\arraystretch}{1}}

\newenvironment{lmatrix}{\renewcommand{\arraystretch}{.5}\small
  \begin{pmatrix}} {\end{pmatrix}\renewcommand{\arraystretch}{1}}
\newenvironment{llmatrix}{\renewcommand{\arraystretch}{.5}\scriptsize
  \begin{pmatrix}} {\end{pmatrix}\renewcommand{\arraystretch}{1}}
\newenvironment{larray}{\renewcommand{\arraystretch}{.5}\begin{array}}
  {\end{array}\renewcommand{\arraystretch}{1}}

  \newenvironment{changemargin}[2]{%
\begin{list}{}{%
\setlength{\topsep}{0pt}%
\setlength{\leftmargin}{#1}%
\setlength{\rightmargin}{#2}%
\setlength{\listparindent}{\parindent}%
\setlength{\itemindent}{\parindent}%
\setlength{\parsep}{\parskip}%
}%
\item[]}{\end{list}}

\title[AG algebras with Hilbert function $(1,4,k,k,4,1)$]
{A note on Artin Gorenstein algebras  with Hilbert function $(1,4,k,k,4,1)$}
\author[Nancy Abdallah]{Nancy Abdallah}
\address{Department of Engineering,
University of Bor\aa{}s,
Bor\aa{}s, Sweden}
\email{\href{mailto:nancy.abdallah@hb.se}{nancy.abdallah@hb.se}}
\urladdr{\href{https://nancyabdallah.info}%
{https://nancyabdallah.info}}

\subjclass[2010]{13E10, 13F55, 13D40, 13C13, 13D02}
\keywords{Artinian algebra, Gorenstein algebra, Free resolutions, Lefschetz property}

\begin{abstract}
\noindent We study the free resolutions of some Artin Gorenstein algebras of Hilbert function $(1,4,k,k,4,1)$ and we prove that all such algebras have the strong Lefschetz property if they have the weak Lefschetz property. In the case $k=4$ we prove that the Hilbert function alone fixes the Betti table. For higher $k$ stronger conditions on the algebras are needed to fix the Betti table. In particular, if the algebra is a complete intersection or if it is defined by an equigenerated ideal then the Betti table is unique. \end{abstract}
\vskip -.2in
\maketitle
\vskip -.2in
\renewcommand{\thethm}{\thesection.\arabic{thm}}
\setcounter{thm}{0}
\vskip -.2in
\section{Introduction}
 Let $S=\KK[x_1,\dots,x_n]$ be the standard graded polynomial ring over a field $\KK$ of characteristic zero, and $I$ a nondegenerate homogeneous ideal (containing no linear form) such that $A=S/I$ is an Artinian algebra. The Hilbert function of $A$ is the function $H(A,i):\Z\rightarrow\Z$, $H(A,i)=\dim_{\KK}A_i$. These dimensions are given by the $H$-vector of $A$, $H(A)=(1,h_1,h_2,\dots,h_s)$ where $h_i=H(A,i)>0$. The integer $s$ is called socle degree of $A$ (the highest integer $i$ for which $\dim_{\KK}A_i\ne 0$). 

Consider the ring $R=\KK[X_1,\dots, X_n]$. $S$ acts on $R$ by differentiation $$x_i(X_j)=\frac{\partial X_j}{\partial X_i}=\delta_{ij}.$$
Define $I_F=\ann_S(F)$ for a homogeneous polynomial $F$ of degree $s$. Macaulay shows in \cite{M}, that $S/I_F$ is Gorenstein of socle degree $s$, and that for every Artin Gorenstein (AG) algebra $A=S/I$ of socle degree $s$, $I=I_F$ for some homogenous polynomial $F\in R_s$. $F$ is called the Macaulay dual generator of $I$ and $I$ the Macaulay inverse system of $F$. 

For a finitely $\Z$-graded $S$-module $M$, the Hilbert Syzygy Theorem \cite{e} ensures the  existence of  a minimal graded finite free resolution, which is an exact sequence
\begin{equation*}
	0 \longrightarrow F_i \longrightarrow F_{i-1} \longrightarrow \cdots \longrightarrow F_0 \longrightarrow M \longrightarrow 0, 
\end{equation*}
for $i\le n$ and  $F_i \simeq \oplus_{j} S(-j)^{b_{i,j}}$ with $b_{i,j} \in \Z$. This information can be captured by the \textit{Betti table}, an array whose entry in position $(i,j)$ (reading over and down) is $b_{i,i+j}$ (see \cite{e}). With this indexing, the bottom row of the Betti table encodes the regularity of $M$. When $M=S/I$ is Artin Gorenstein, the regularity of $M$ coincides with its socle degree. 
\begin{exm}
	For $F=y_1y_2y_3 \in \KK[y_1,y_2,y_3]$, we have $I_F = \langle x_1^2,x_2^2,x_3^2 \rangle$ and the minimal free resolution is given by the Koszul complex
	\[
	0 \longrightarrow S(-6) \longrightarrow S(-4)^3 \longrightarrow S(-2)^3 \longrightarrow S \longrightarrow S/I_F \longrightarrow 0, 
	\]
	which in Betti table notation is written as
	\vskip -1.5in
		\begin{center}
		{\scriptsize \begin{verbatim}
				
				+--------------+
				|       0 1 2 3|
				|total: 1 3 3 1|
				|    0: 1 . . .|
				|    1: . 3 . .|
				|    2: . . 3 .|
				|    3: . . . 1|
				+--------------+
			\end{verbatim}
		}
	\end{center}
\end{exm}
For codimension 4 regularity (socle degree) 3 Artin Gorenstein (AG) algebras, the $H$-vector is $(1,4,4,1)$ and there are three possible Betti tables (see \cite{AS}). For AG algebras with $H$-vector $(1,4,k,4,1)$, $k=4,\dots,10$ (codimenion 4 regularity 4), there are 16 possible betti tables (see \cite{SSY}). In both cases the corresponding algebras have the weak Lefschetz property (see \cite{G}). In this note, we study the structure of AG algebras with regularity 5. In this case the $H$-vector is $(1,4,k,k,4,1)$, $k=4,\dots,10$. We prove first that for these algebras the weak Lefschetz property implies the strong Lefschetz property, which was noticed in \cite{AS} by looking at the Jordan type for a general linear form. In \cite{AS}, the authors conjectured also that there are 36 possible Betti tables for such AG algebras. Among these 36 Betti tables, there is only one that corresponds to the case $k=4$, and one that corresponds to a complete intersection. The main results of this paper are Theorems \ref{k4thm} and \ref{CIthm} where we prove that the conjecture holds for these two cases.

In Section 2, we recall some definitions and results about Lefschetz properties and we investigate these properties in the case of algebras with codimension 4 and regularity 5. AG algebras of $H$-vector $(1,4,4,4,4,1)$ are considered in Section 3. Section 4 is devoted to the study of complete intersections. We conclude in Section 5 with some open problems.

\section{Lefschetz Properties}

\begin{defn}
	Let $A$ be an Artinian $\Z$-graded algebra. We say that 
	\begin{enumerate}
		\item $A$ has the weak Lefschetz property (WLP) if there exists a linear form $\ell\in A_1$ such that the multiplication map $\ell: A_i\rightarrow A_{i+1}$ has maximal rank for all $i$. $\ell$ is called a weak Lefschetz element of $A$.
		\item $A$ has the strong Lefschetz property (SLP), if there is a linear form $\ell$ such that the multiplication map $\ell^k:A_i\rightarrow A_{i+k}$ has maximal rank for all $i$ and $k$.	$\ell$ is called a strong Lefschetz element $A$.
	\end{enumerate}
\end{defn}

When $A$ is Gorenstein, $A$ has SLP if there is a linear form $\ell$ such that the multiplication map $\ell^{s-2i}:A_i\rightarrow A_{s-i}$ has maximal rank (bijective) for all $i=1,\dots,\lfloor\frac{s}{2}\rfloor$, where $s$ is the socle degree of $A$. 

It is clear from the definition that if $A$ has SLP then $A$ has WLP. In codimension 2, all standard graded Artinian algebras have SLP provided that the field has characteristic zero or $p>s$ (\cite{HMNW}). In codimension 3, all complete intersections (CI) have WLP (\cite{HMNW}), and in \cite{BMMNZ2}, the authors conjectured that all codimension 3 AG algebras have WLP. In a recent paper (\cite{AAISY}), we prove that all codimension 3 AG algebras with Sperner number (maximal value of $H$-vector) at most 6, as well as algebras of $H$-vector $(1,3,4,5,\dots,(s-1),s^k,(s-1),\dots,3,1)$, have SLP. Our computation in \cite{AS} shows that for codimension 4 AG algebras with socle degree 5, the SLP holds when the Sperner number is at most 6, and the bound is sharp. 

We recall now the definition of Hessians and its relation with strong Lefschetz properties. 
\begin{defn}(\cite{MW}, Definition 3.1)
	Let $F$ be a polynomial in $R$ and $A=S/\ann_S F$ be its associated AG algebra. Let $\mathcal{B}_i=\{\alpha_m^{(i)} \}_m$ be a $\KK$-basis of $A_i$. The $i$-th Hessian matrix of $F$ with respect to $\mathcal{B}_i$ is defined by
	$$(\Hess^i(F))_{u,v}=(\alpha_u^{(i)}\alpha_v^{(i)}\circ F).$$
	We write $\hess^i(F)$ for the determinant of $\Hess^i(F)$ which is independent of the basis $\mathcal{B}_i$ up to a non-zero constant multiple.  For $i=1$, $\Hess^1(F)$ coincides with the usual Hessian. 
\end{defn}
The following result due to T. Maeno and J. Watanabe gives a strong connection between the Hessian matrices and the Strong Lefschetz propeties. 

\begin{thm}\label{hesscrit}
	Let $A=S/\ann_S F$ be an AG algebra. $\ell=a_1x_1+\cdots+a_nx_n\in A_1$ is a strong Lefschetz element of $A$ if and only if $\hess^i(F)(a_1,\dots,a_n)\neq 0$ for all $i=1,\dots,\lfloor\frac{s}{2}\rfloor$. In particular, the multiplication map $\ell^{s-2i}:A_i\rightarrow A_{s-i}$ is bijective if and only if  $\hess^i(F)(a_1,\dots,a_n)\neq 0$.
\end{thm}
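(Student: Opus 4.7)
The plan is to reduce the bijectivity of the multiplication map $\mu_i := (\times \ell^{s-2i})\colon A_i \to A_{s-i}$ to the non-degeneracy of a symmetric bilinear form on $A_i$, and then match the Gram matrix of that form with $\Hess^i(F)$ evaluated at $(a_1,\dots,a_n)$.

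First I would invoke Macaulay duality, which gives a perfect pairing $A_i \times A_{s-i} \to A_s \cong \KK$ sending $(\alpha,\beta)$ to $(\alpha\beta)\circ F$. Composing $\mu_i$ with this pairing produces the symmetric bilinear form $Q_i$ on $A_i$ defined by $Q_i(\alpha,\beta) = (\alpha\beta\ell^{s-2i}) \circ F$. Because the Macaulay pairing is perfect, $\mu_i$ is bijective if and only if $Q_i$ is non-degenerate, which is equivalent to the matrix $M_i := \bigl[\,Q_i(\alpha_u^{(i)}, \alpha_v^{(i)})\,\bigr]_{u,v}$ being invertible.

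The second step is the identification $M_i = (s-2i)! \cdot \Hess^i(F)(a_1,\dots,a_n)$ up to a nonzero scalar. The key auxiliary fact is a polarization identity: for any homogeneous $G \in R_d$, $\ell^d \circ G = d! \cdot G(a_1,\dots,a_n)$. This is verified monomial-by-monomial from $x^\beta \circ X^\alpha = \frac{\alpha!}{(\alpha-\beta)!}X^{\alpha-\beta}$ (zero if $\beta \not\leq \alpha$ componentwise). Applying this identity with $d = s-2i$ to $G = (\alpha_u^{(i)}\alpha_v^{(i)}) \circ F \in R_{s-2i}$, and using that contractions by $\ell^{s-2i}$ and by $\alpha_u^{(i)}\alpha_v^{(i)}$ commute (since $S$ is commutative and both act by differentiation), gives
\[
Q_i(\alpha_u^{(i)},\alpha_v^{(i)}) \;=\; \ell^{s-2i} \circ \bigl((\alpha_u^{(i)}\alpha_v^{(i)})\circ F\bigr) \;=\; (s-2i)!\,\bigl((\alpha_u^{(i)}\alpha_v^{(i)})\circ F\bigr)(a_1,\dots,a_n).
\]
Hence $\det M_i$ and $\hess^i(F)(a_1,\dots,a_n)$ differ only by the nonzero factor $((s-2i)!)^{\dim A_i}$, so one vanishes exactly when the other does.

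Finally I would conclude using the Gorenstein SLP criterion recalled just before the statement: $\ell$ is a strong Lefschetz element if and only if $\mu_i$ is bijective for every $i=1,\dots,\lfloor s/2 \rfloor$. Combined with the previous two steps, this says $\ell$ is strong Lefschetz iff $\hess^i(F)(a_1,\dots,a_n) \neq 0$ for all such $i$, and the individual bijectivity claim follows as a byproduct. There is no serious obstacle here; the only delicate points are the polarization identity and keeping straight that $Q_i$ is well-defined on $A_i$ (which follows because any representative of a class in $\ann_S(F)$ contributes zero), and these are both routine consequences of the definitions.
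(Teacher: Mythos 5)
The paper offers no proof of this theorem: it is quoted as a known result of Maeno and Watanabe (their Definition 3.1 and Hessian criterion in \cite{MW}), so there is no internal argument to compare against line by line. Your proposal is correct, and it is essentially the standard Maeno--Watanabe proof. Both hinges check out: (i) since the Macaulay pairing $A_i\times A_{s-i}\to\KK$, $(\alpha,\beta)\mapsto(\alpha\beta)\circ F$, is exact and gives $\dim_\KK A_i=\dim_\KK A_{s-i}$, the map $\ell^{s-2i}\colon A_i\to A_{s-i}$ is bijective iff injective iff the symmetric form $Q_i(\alpha,\beta)=(\alpha\beta\ell^{s-2i})\circ F$ is nondegenerate (and $Q_i$ descends to $A_i$ exactly as you say, because $(\alpha\beta\ell^{s-2i})\circ F=(\beta\ell^{s-2i})\circ(\alpha\circ F)=0$ for $\alpha\in\ann_S F$); (ii) your polarization identity $\ell^d\circ G=d!\,G(a_1,\dots,a_n)$ for $G\in R_d$ is right, since $x^\beta\circ X^\alpha=\alpha!\,\delta_{\alpha\beta}$ when $|\alpha|=|\beta|$, and applying it to $G=(\alpha_u^{(i)}\alpha_v^{(i)})\circ F$ identifies the Gram matrix of $Q_i$ with $(s-2i)!\,\Hess^i(F)(a_1,\dots,a_n)$ entrywise, hence the determinants vanish simultaneously. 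Two points deserve explicit mention in a careful write-up. First, the invertibility of the scalar $(s-2i)!$ uses $\charr\KK=0$ (the paper's standing hypothesis); this is genuinely needed, as the criterion can fail in small positive characteristic. Second, your last step invokes the Gorenstein reduction of SLP to bijectivity of the extremal maps $\ell^{s-2i}\colon A_i\to A_{s-i}$ for $i=1,\dots,\lfloor s/2\rfloor$; the paper states this without proof just before the theorem, so citing it is legitimate, but a one-line factorization argument (if $i+k\le s-i$, then $\ell^{s-2i}$ factors through $\ell^k\colon A_i\to A_{i+k}$, forcing injectivity and hence maximal rank; the case $i+k>s-i$ follows dually by surjectivity) would make your proof fully self-contained.
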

The following theorem due to P.~Gordan and M.~Noether in \cite{GoNo} was reproved in \cite[Theorem 1.2]{Los}, \cite{WatdeB}, \cite{BFP}.

\begin{thm}[Gordan-Noether]\label{GNlem} If $F$ is a form expressed in at most four variables and the characteristic of $\KK$ is zero, then the Hessian determinant $\hess(F)$ is identically zero  if and only if $F$ is  a cone: that is, if $F$ is annihilated by a linear form (element of $S_1$) in the Macaulay duality.
\end{thm}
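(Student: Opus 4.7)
The proof splits naturally into the two directions of the equivalence.

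For the easy direction, suppose $F$ is annihilated by a linear form $\ell = \sum a_i x_i \in S_1$. After a linear change of variables in $S$ with the contragredient change in $R$ (an operation that preserves both hypothesis and conclusion), I may assume $\ell = x_1$. Then $\partial F/\partial X_1 = x_1 \circ F = 0$, so $F \in \KK[X_2, \ldots, X_n]$, and the first row and column of the Hessian matrix $\Hess(F) = (\partial^2 F/\partial X_i \partial X_j)$ vanish identically. Hence $\hess(F) \equiv 0$, and this argument uses no restriction on $n$.

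The substantive direction is the converse. The plan is to study the polar map $\phi : \PP^{n-1} \to \PP^{n-1}$, $P \mapsto (F_1(P) : \cdots : F_n(P))$, where $F_i = \partial F/\partial X_i$. Since $\hess(F) = \det(\partial F_i/\partial X_j)$ is the Jacobian determinant of $\phi$, the hypothesis $\hess(F) \equiv 0$ is equivalent (in characteristic zero) to non-dominance of $\phi$: the image closure $Y \subset \PP^{n-1}$ has dimension at most $n-2$, or equivalently $F_1, \ldots, F_n$ are algebraically dependent over $\KK$. The goal is to upgrade this algebraic dependence to a linear one.

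The key geometric input is that under this hypothesis the fibers of $\phi$ are linear subspaces of $\PP^{n-1}$. To see this, I would take a minimal algebraic relation $P(F_1, \ldots, F_n) = 0$ and differentiate with respect to each $X_j$; the resulting identities exhibit the vector $v = ((\partial P/\partial y_i)(F))_i$ as a nontrivial element of $\ker \Hess(F)$ at a general point. Since $v$ depends only on $\phi(X)$, combining with Euler's identity $\sum_j X_j F_{ij} = (d-1)F_i$ for the homogeneous $F_i$ shows that the gradient is constant along the line spanned by $v$; iterating this yields linear fibers of dimension at least $n - 1 - \dim Y$, foliating $\PP^{n-1}$ over $Y$.

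The final, dimension-sensitive step is to show that for $n \leq 4$ this family of linear fibers has a common point in $\PP^{n-1}$, which by duality is precisely a linear relation $\sum a_i F_i = 0$ saying that $\ell = \sum a_i x_i$ annihilates $F$. For $n \leq 3$ this is an elementary dimension count. The main obstacle, and the heart of the theorem, is the case $n = 4$, where $Y$ can have dimension up to $2$ and a generic fiber is a line in $\PP^3$; one must analyze the classical projective geometry of line congruences in $\PP^3$, and crucially exploit the symmetry of $\Hess(F)$ (an integrability condition expressing that $\phi$ arises from a gradient) to force the congruence to collapse into a pencil of lines through a common point. This bound is sharp: in $n = 5$ variables Gordan--Noether exhibit explicit forms with vanishing Hessian that are not cones, so any proof strategy must break down in exactly this step.
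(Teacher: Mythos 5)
The paper does not prove this statement at all: it is quoted as a classical theorem, with the proof delegated to the cited literature (Gordan--Noether \cite{GoNo}, and the modern accounts in \cite{Los}, \cite{WatdeB}, \cite{BFP}). So the comparison here is against those references, and measured that way your proposal correctly reproduces the classical outline but stops short of a proof at exactly the point where the theorem is hard. Your easy direction (cone $\Rightarrow$ vanishing Hessian) is complete and correct. The setup of the converse is also right: in characteristic zero, $\hess(F)\equiv 0$ is equivalent to non-dominance of the polar map, i.e.\ to algebraic dependence of the partials, and differentiating a minimal relation $P(F_1,\dots,F_n)=0$ produces a vector field $h(X)=\bigl((\partial P/\partial y_i)(F_1,\dots,F_n)\bigr)_i$ in the kernel of $\Hess(F)$ that is constant on fibers of the polar map. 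But two steps are asserted rather than argued. First, the passage from ``$h\in\ker\Hess(F)$ at a general point'' to ``the polar map is constant along the lines in direction $h$'' is the Gordan--Noether self-translation lemma ($F_i(X+t\,h(X))=F_i(X)$ identically in $t$), which requires a genuine argument --- the kernel condition is an infinitesimal statement, and promoting it to invariance along a whole line uses that $h$ factors through $\phi$; your ``iterating this yields linear fibers'' compresses a nontrivial induction.

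The decisive gap is the case $n=4$, which you yourself identify as ``the heart of the theorem'' and then dispatch with ``one must analyze the classical projective geometry of line congruences in $\PP^3$ \dots\ to force the congruence to collapse into a pencil of lines through a common point.'' That sentence is a statement of the goal, not a proof: showing that the congruence of fiber lines degenerates to lines through a common point is precisely the content of Gordan and Noether's (and Lossen's, and Bricalli--Favale--Pirola's) work, and nothing in your sketch indicates how the symmetry/integrability of $\Hess(F)$ is actually exploited to rule out, say, a congruence of lines all meeting a fixed line rather than a fixed point. (Even your $n=3$ ``elementary dimension count'' needs a word of care: any two lines in $\PP^2$ meet, so pairwise intersection of fibers is automatic and does not by itself force a pencil; one must use that distinct general fibers are disjoint away from the indeterminacy locus.) Your sharpness remark about the Perazzo-type examples in five variables is correct and correctly explains why the argument must be dimension-sensitive, but as submitted the proposal is an accurate road map of the classical proof with its central span missing.
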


In \cite{AS}, our computation showed that for AG algebras with $H$-vector $(1,4,k,k,4,1)$, $k=4,\dots,10$, whenever WLP holds, SLP holds as well. The following proposition proves that the claim holds.
\begin{prop}\label{WLPSLP}
	Let $A=S/I$ be an AG algebra where $I$ is a nondegenerate homogeneous ideal and $H(A)=(1,4,k,k,4,1)$, $k=4,\dots, 10$. If $A$ has WLP then $A$ has SLP.	
\end{prop}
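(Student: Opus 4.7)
The plan is to reduce SLP to two Hessian non-vanishing conditions via Theorem \ref{hesscrit}, obtain one from WLP and the other from Gordan-Noether, and then intersect two nonempty Zariski open sets in the irreducible affine space $\KK^4 \cong A_1$.

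Since the socle degree is $s=5$, by the remark following the SLP definition and Theorem \ref{hesscrit}, a linear form $\ell=a_1x_1+\cdots+a_4x_4$ is a strong Lefschetz element for $A=S/\ann_S F$ precisely when
\[
\hess^1(F)(a_1,\ldots,a_4)\neq 0 \quad\text{and}\quad \hess^2(F)(a_1,\ldots,a_4)\neq 0,
\]
corresponding respectively to bijectivity of $\ell^3:A_1\to A_4$ and of $\ell:A_2\to A_3$. So the proposition reduces to showing that both polynomial functions $\hess^1(F)$ and $\hess^2(F)$ on $A_1$ are not identically zero, since in that case the Zariski open locus where both are nonzero is nonempty in the irreducible space $A_1\cong\KK^4$.

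For $\hess^2(F)$, I use the WLP hypothesis. Since $H(A,2)=H(A,3)=k$, a WLP element $\ell$ must give a multiplication map $\ell:A_2\to A_3$ of maximal rank between equidimensional spaces, hence an isomorphism. By Theorem \ref{hesscrit} applied with $i=2$, this bijectivity forces $\hess^2(F)(\ell)\neq 0$; in particular $\hess^2(F)$ is not the zero polynomial on $A_1$. For $\hess^1(F)$, I invoke Theorem \ref{GNlem}: the hypothesis that $I$ is nondegenerate means that no linear form annihilates $F$ under the Macaulay pairing, i.e.\ $F$ is not a cone in the four variables $X_1,\ldots,X_4$, so by Gordan-Noether the classical Hessian determinant $\hess^1(F)=\hess(F)$ is not identically zero.

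With both $\hess^1(F)$ and $\hess^2(F)$ nonzero as polynomial functions on the irreducible space $A_1\simeq\KK^4$, their common non-vanishing locus is a nonempty Zariski open set. Any $\ell$ in this intersection is a strong Lefschetz element by Theorem \ref{hesscrit}, so $A$ has SLP. There is no real obstacle here: the only subtle point is recognizing that for Hilbert function $(1,4,k,k,4,1)$ the only ``hidden'' equidimensional multiplication map is in the middle, so WLP automatically supplies the harder Hessian ($\hess^2$), while the codimension-four Gordan-Noether theorem takes care of the usual Hessian $\hess^1$ for free from nondegeneracy.
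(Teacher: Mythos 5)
Your proof is correct and follows essentially the same route as the paper: the second Hessian is nonzero because the WLP element makes the equidimensional middle map $\ell:A_2\to A_3$ bijective, the first Hessian is nonzero by Gordan--Noether since nondegeneracy prevents $F$ from being a cone, and Theorem \ref{hesscrit} concludes. You merely make explicit the intersection of the two nonempty Zariski open loci in $A_1\cong\KK^4$, which the paper leaves implicit.
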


\begin{proof}
	The first Hessian is non zero by Gordan Noether (Theorem \ref{GNlem}) and the second Hessian is non zero by the WLP property (the middle map is bijective). The result follows by Theorem \ref*{hesscrit}.
\end{proof}

\section{Artin Gorenstein algebras of $H$-vector $(1,4,4,4,4,1)$}

We consider in this section $AG$ algebras with $H$-vector $(1,4,4,4,4,1)$. In \cite{AS}, we conjectured that there is only one Betti table for this $H$-vector and that all these AG algebras have SLP. We prove in this section that both claims hold. 

\begin{prop}
	Every AG algebra with $H$-vector $(1,4,4,4,4,1)$ has SLP. 
\end{prop}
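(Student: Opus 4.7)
The plan is to exploit the flatness of the Hilbert function in degrees $1$ through $4$ to establish WLP, and then invoke Proposition~\ref{WLPSLP} to upgrade to SLP.

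First, since $h_1 = 4$ the ideal $I$ contains no linear form, so the Macaulay dual generator $F$ involves four essential variables and is not a cone. Gordan--Noether (Theorem~\ref{GNlem}) then forces $\hess^1(F) \not\equiv 0$, so one can pick a linear form $\ell = a_1 x_1 + \cdots + a_4 x_4$ with $\hess^1(F)(a_1,\ldots,a_4) \neq 0$. By Theorem~\ref{hesscrit}, this is exactly the condition that $\ell^3 : A_1 \to A_4$ is bijective.

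The decisive step is the observation that $A_1,A_2,A_3,A_4$ all have dimension $4$ and that $\ell^3$ factors as
\[
A_1 \xrightarrow{\ell} A_2 \xrightarrow{\ell} A_3 \xrightarrow{\ell} A_4.
\]
Bijectivity of the composition between equidimensional spaces immediately forces the first factor to be injective (hence bijective) and the last to be surjective (hence bijective); writing the middle factor as $\ell = (\ell|_{A_3 \to A_4})^{-1} \circ \ell^3 \circ (\ell|_{A_1 \to A_2})^{-1}$ exhibits $\ell : A_2 \to A_3$ as a composition of bijections, so it too is bijective. Combined with the automatic maximal rank of $\ell : A_0 \to A_1$ and (by Gorenstein duality) $\ell : A_4 \to A_5$ for $\ell \neq 0$, this establishes WLP for the chosen $\ell$. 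Proposition~\ref{WLPSLP} then yields SLP.

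There is no real obstacle in this argument: the equal-dimension condition $h_1=h_2=h_3=h_4$ is precisely what allows the nonvanishing of the classical Hessian, guaranteed by Gordan--Noether, to propagate to every single-step multiplication. In particular, one obtains $\hess^2(F)(a)\neq 0$ for free, bypassing any direct analysis of the second Hessian.
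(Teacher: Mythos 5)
Your proof is correct, but it takes a genuinely different route from the paper: the paper's proof is two lines, citing \cite[Proposition 3.3]{AADFIMMMN} for WLP and then invoking Proposition~\ref{WLPSLP}, whereas you prove WLP from scratch. Your chain of deductions checks out: nondegeneracy ($h_1=4$) means $F$ is annihilated by no linear form, so Theorem~\ref{GNlem} gives $\hess^1(F)\not\equiv 0$; choosing $\ell$ with $\hess^1(F)(a)\neq 0$ (possible since $\KK$ is infinite) makes $\ell^3\colon A_1\to A_4$ bijective by Theorem~\ref{hesscrit}; and since $h_1=h_2=h_3=h_4=4$, factoring $\ell^3$ through $A_2$ and $A_3$ forces each single step to be bijective --- the first factor is injective, the last is surjective, hence both are bijective between equidimensional spaces, and the middle map is then a composition of bijections. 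The boundary maps $A_0\to A_1$ and (by Gorenstein duality) $A_4\to A_5$ have maximal rank for any $\ell\neq 0$, so WLP holds. Note that your closing observation makes the final appeal to Proposition~\ref{WLPSLP} dispensable: bijectivity of $\ell\colon A_2\to A_3$ gives $\hess^2(F)(a)\neq 0$ by the ``in particular'' clause of Theorem~\ref{hesscrit}, so the same $\ell$ is already a strong Lefschetz element. What each approach buys: the paper's citation is shorter and defers the work to a known result on Hilbert functions with flat middle, while your argument is self-contained within the tools the paper has already set up (Gordan--Noether plus the Maeno--Watanabe Hessian criterion) and makes transparent exactly where the flatness $h_1=h_2=h_3=h_4$ is used.
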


\begin{proof}
	The fact that such an algebra has WLP follows from in \cite[Proposition 3.3]{AADFIMMMN}. By Proposition \ref{WLPSLP}, SLP holds as well. 
\end{proof}

\noindent We review the theorems of Macaulay and Gotzmann (\S 7.2 of \cite{cag}). For a graded algebra $S/I$ with Hilbert function $h_i$, write 
\[
h_i = \binom{a_i}{i} + \binom{a_{i-1}}{i-1}+ \cdots \mbox{ and }h_i^{\langle i \rangle} = \binom{a_i+1}{i+1} + \binom{a_{i-1}+1}{i}+ \cdots, 
\]
\noindent where $ a_i > a_{i-1} > \cdots$. Then we have
\begin{thm}[Macaulay]\label{Macaulay}
	In the setting above, 
	\[
	h_{i+1} \le h_i^{\langle i \rangle}.
	\]
\end{thm}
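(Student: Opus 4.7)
The plan is to reduce, in two stages, to the case of a lex-segment ideal, for which the inequality holds with equality by a direct count.

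First I would replace $I$ by its generic initial ideal $J=\gin(I)$ taken with respect to the reverse lexicographic order. Since the passage to the initial ideal via a flat Gr\"obner deformation preserves the Hilbert function, we have $\dim_{\KK}(S/J)_k = h_k$ for all $k$; moreover, in characteristic zero, $J$ is Borel-fixed (strongly stable). So it suffices to prove the bound for strongly stable monomial ideals.

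Next, I would introduce the lex-segment ideal $L\subseteq S$ whose complement in each degree $k\le i+1$ consists of the $h_k$ lexicographically largest monomials of that degree; Macaulay's construction guarantees that this prescription actually defines an ideal with $\dim_{\KK}(S/L)_k = h_k$ for $k\le i$. For this $L$, an explicit combinatorial count using the Macaulay representation $h_i=\binom{a_i}{i}+\binom{a_{i-1}}{i-1}+\cdots$ gives $\dim_{\KK}(S/L)_{i+1}=h_i^{\langle i\rangle}$: writing the $h_i$ largest monomials of degree $i$ as a disjoint union of blocks of sizes $\binom{a_j}{j}$ (monomials involving only the first $a_j$ variables, with a suitable prefix), multiplication by $\{x_1,\dots,x_n\}$ sends each block injectively onto the lex-initial set of size $\binom{a_j+1}{j+1}$ in degree $i+1$, which is precisely the desired growth.

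The main obstacle is the \emph{extremality} of the lex-segment: for every strongly stable monomial ideal $J$ with $\dim_{\KK}(S/J)_i=h_i$, we must show $\dim_{\KK}(S/J)_{i+1}\le \dim_{\KK}(S/L)_{i+1}$. I would handle this by induction on the number of variables $n$. Splitting the degree-$(i+1)$ monomials of $S/J$ according to whether they involve $x_n$ and exploiting the strongly stable property to interpret the two pieces as graded components of quotients in $\KK[x_1,\dots,x_{n-1}]$ with well-defined Hilbert functions, the inductive hypothesis provides growth bounds on each piece; these bounds then have to be assembled into the total bound $h_i^{\langle i\rangle}$ via the additivity $\binom{a}{k}=\binom{a-1}{k}+\binom{a-1}{k-1}$ applied termwise to the Macaulay expansion. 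The delicate point is this combinatorial bookkeeping --- verifying that the termwise splitting of the Macaulay representation is compatible with the inductive growth function. A cleaner alternative, which I would fall back on if the direct combinatorics becomes cumbersome, is to invoke Green's hyperplane restriction theorem for a generic linear form $\ell$ together with the short exact sequence giving $\dim_{\KK}(S/J)_{i+1} = \dim_{\KK}\bigl(\ell\cdot(S/J)_i\bigr) + \dim_{\KK}(S/(J,\ell))_{i+1}$, and bounding both terms.
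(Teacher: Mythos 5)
The paper offers no proof of this statement: it is Macaulay's classical growth theorem, quoted verbatim from \S 7.2 of \cite{cag} as background for the argument in Theorem \ref{k4thm}. So there is no internal proof to compare against; your proposal has to be judged on its own, and what you describe is the standard textbook proof (essentially Bruns--Herzog, Theorem 4.2.10, or Green's approach): pass to $\gin(I)$ in the reverse lexicographic order, which preserves the Hilbert function and in characteristic zero is strongly stable; then prove that among strongly stable monomial ideals with prescribed dimension in degree $i$, the lex segment has minimal growth into degree $i+1$, and compute that growth explicitly via the Macaulay representation. That outline is correct, and your fallback via Green's hyperplane restriction theorem together with the exact sequence for a generic linear form is also a legitimate, self-contained route (this is Green's proof of Macaulay's theorem, not a circularity).

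Two points deserve correction or completion. First, a convention slip: with $x_1>\cdots>x_n$, the lex \emph{ideal} consists of the lexicographically largest monomials in each degree, so the complement spanning $(S/L)_k$ consists of the $h_k$ lex-\emph{smallest} monomials, not the largest; your block decomposition of the Macaulay representation then counts these correctly. Relatedly, for the inequality you do not need Macaulay's harder converse statement that the full lex-segment prescription defines an ideal in all degrees --- it suffices to compare, in the single degree $i$, the span $J_i$ with the lex segment $L_i$ of the same dimension and to show $\dim_{\KK} S_1\cdot L_i \le \dim_{\KK} S_1\cdot J_i$. Second, and more substantively, the step you label the ``main obstacle'' --- the induction on $n$, splitting degree-$(i+1)$ monomials by their $x_n$-degree and reassembling the bounds through the Pascal identity applied termwise to the Macaulay expansion --- is not a technicality but the entire mathematical content of the theorem; as written, your proposal is an accurate roadmap rather than a proof, since that combinatorial lemma is only asserted. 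If you intend the argument to be complete, either carry out that induction in detail or commit to the Green's-theorem route, where the bounding of $\dim_{\KK}(S/(J,\ell))_{i+1}$ by the restricted Macaulay expansion does the same work in a cleaner package.
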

\begin{thm}[Gotzmann]\label{Gotzman} If $I$ is generated in a single degree $t$ and equality holds in Macaulay's formula in the first degree $t$, then 
	\[
	h_{t+j} = \binom{a_t+j}{t+j} + \binom{a_{t-1}+j-1}{t+j-1}+ \cdots
	\]
\end{thm}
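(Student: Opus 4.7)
The plan is to deduce Gotzmann's persistence theorem by comparing $S/I$ with the quotient by a lex-segment ideal and using Green's hyperplane restriction theorem together with an induction on the number of variables. The key observation is that the right-hand side of the conclusion is precisely the Hilbert function of $S/L$, where $L$ is the lex-segment ideal generated in a single degree $t$ whose Hilbert function in degree $t$ matches that of $S/I$. So persistence for $I$ amounts to the statement that $I$ and $L$ share the same Hilbert function in every degree $\ge t$.

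First I would build the comparison ideal and verify the combinatorial content of the formula. The Macaulay representation $h_t=\binom{a_t}{t}+\binom{a_{t-1}}{t-1}+\cdots$ with $a_t>a_{t-1}>\cdots$ determines a unique lex-segment of $h_t$ monomials in $S_t$; take $L$ to be the ideal generated by the complementary lex-segment in degree $t$. An induction on $j$ using Pascal's rule shows that $\dim(S/L)_{t+j}$ equals the stated sum of binomials. Since this is the minimum growth allowed by Macaulay's theorem, the hypothesis of equality in degree $t+1$ forces $\dim(S/I)_{t+1}=\dim(S/L)_{t+1}$, so $I$ and $L$ agree in degrees $t$ and $t+1$.

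The heart of the argument is the persistence step: assuming $\dim(S/I)_{t+j}=\dim(S/L)_{t+j}$, deduce the same in degree $t+j+1$. Here the hypothesis that $I$ is generated in the single degree $t$ is essential, because it prevents new relations from appearing that could shrink the Hilbert function below the Macaulay bound. I would pass to a generic hyperplane section $\overline{A}=(S/I)/\ell(S/I)$, apply Green's hyperplane restriction theorem to bound $\dim \overline{A}_{t+j}$, and combine with Macaulay's bound on the growth of $S/I$ itself. The chain of inequalities is forced to be equalities, which allows the induction on the number of variables $n$ to be applied to $\overline{A}$ in one fewer variable, closing the loop.

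The main obstacle is this persistence step itself: extracting from equality in one Macaulay (or Green) bound the rigidity needed to force equality in the next. One needs a precise structural description — essentially that equality compels the graded pieces of $I$ to be ``as close to lex as possible'' — and this is what makes the hypothesis of generation in a single degree indispensable. Everything else in the proof (the binomial identity identifying the formula with a lex-segment Hilbert function, and the induction on variables) is bookkeeping once the single-step persistence is in hand.
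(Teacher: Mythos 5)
The paper does not prove this theorem at all: it is quoted as known background with a citation to \S 7.2 of \cite{cag}, so there is no internal proof to compare against, and your attempt must stand on its own. As a plan, it follows the classical route (Green's proof of persistence via the hyperplane restriction theorem; cf.\ the treatment in \cite{BH}), and your first two steps are sound: the right-hand side of the formula is indeed the Hilbert function of $S/L$ for the lex ideal $L$ generated in degree $t$ with $\dim L_t=\dim I_t$, verified by a Pascal-rule induction, and the equality hypothesis identifies $\dim(S/I)_{t+1}$ with $\dim(S/L)_{t+1}$. One slip of direction: $S/L$ realizes the \emph{maximal} growth permitted by Macaulay's bound (equivalently, the lex ideal grows minimally), not the ``minimum growth'' as you write; this is harmless at the first step but the inequality directions matter once you start chaining bounds.

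The genuine gap is the one you flag yourself: the single-step persistence implication is the entire content of Gotzmann's theorem, and your sketch of it is circular --- ``equality compels the graded pieces of $I$ to be as close to lex as possible'' is precisely what must be proved, not an available tool. Concretely, three ingredients are missing. First, Green's theorem bounds the generic restriction $\overline{A}=A/\ell A$ by a \emph{different} combinatorial operation from Macaulay's: if $h=\binom{a_d}{d}+\binom{a_{d-1}}{d-1}+\cdots$, Green gives $\dim\overline{A}_d\le \binom{a_d-1}{d}+\binom{a_{d-1}-1}{d-1}+\cdots$, whereas Macaulay uses $h^{\langle d\rangle}$; your proposal never distinguishes the two, and the proof lives exactly in the Pascal-type identity relating them, fed through the exact sequence $A_d \xrightarrow{\ \ell\ } A_{d+1} \to \overline{A}_{d+1} \to 0$ to force equality in Green's bound before the induction on variables can be invoked. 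Second, the role of generation in the single degree $t$ is not that it ``prevents new relations'': new \emph{generators} of $I$, not relations, are what would depress the Hilbert function, and the hypothesis is used positively, via $I_{d+1}=S_1 I_d$ for $d\ge t$, to produce a lower bound on $h_{d+1}$ that meets Macaulay's upper bound. Third, one must check that $\overline{I}\subset S/\ell S$ is again generated in degree $t$ for generic $\ell$ so that the induction hypothesis applies. Until these are written out, the proposal is an accurate table of contents for a known proof rather than a proof.
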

\newpage 
We now prove our first main result.
\begin{thm}\label{k4thm}
	An AG algebra $A$ with Hilbert function $(1,4,4,4,4,1)$ has Betti table
	
	\begin{center}
		{\scriptsize \begin{verbatim}
				
				+-----------------+
				|       0 1  2 3 4|
				|total: 1 9 16 9 1|
				|    0: 1 .  . . .|
				|    1: . 6  8 3 .|
				|    2: . .  . . .|
				|    3: . .  . . .|
				|    4: . 3  8 6 .|
				|    5: . .  . . 1|
				+-----------------+
			\end{verbatim}
		}
	\end{center}
\end{thm}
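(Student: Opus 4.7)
The strategy is to work with the subideal $J=(I_2)$ generated by the quadrics in $I$, to bound the Hilbert function of $S/J$ using the strong Lefschetz property, and then to close the Betti table via Gorenstein self-duality together with the alternating-sum identity coming from $\mathrm{Hilb}(A)$.

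First, by the preceding proposition, $A$ has SLP; fix a strong Lefschetz element $\ell\in A_1$. All multiplication maps $\ell\colon A_i\to A_{i+1}$ then have maximal rank, so $A/\ell A$ has Hilbert function $(1,3,0,0,0,0)$ and in particular $S_2 = I_2+\ell S_1$. The dimension count $\dim I_2 = 6$, $\dim \ell S_1 = 4$, $\dim S_2 = 10$ forces $I_2\cap \ell S_1 = 0$, so $I_2$ maps isomorphically onto $S_2/\ell S_1 \cong \KK[x_2,x_3,x_4]_2$, whence $S/(J+(\ell))\cong \KK[x_2,x_3,x_4]/(x_2,x_3,x_4)^2$ has Hilbert function $(1,3,0,0,\ldots)$.

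Next I apply the standard exact sequence
\begin{equation*}
 0 \to ((J:\ell)/J)(-1) \to (S/J)(-1)\xrightarrow{\cdot\ell} S/J \to S/(J+(\ell))\to 0.
\end{equation*}
For $i\ge 2$ the rightmost term vanishes, yielding $h^{S/J}_i \le h^{S/J}_{i-1}$. Starting from $h^{S/J}_2 = 4$ and combining with $h^{S/J}_i\ge h^A_i$ (since $J\subseteq I$), this forces $h^{S/J}_3 = h^{S/J}_4 = 4$ and $h^{S/J}_5 \le 4$. The first two equalities give $J_3=I_3$ and $J_4=I_4$, so $\beta_{1,3}=\beta_{1,4}=0$; the last yields $\beta_{1,5} = h^{S/J}_5 - 1 \le 3$.

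Finally, I combine this upper bound with the Gorenstein self-duality $\beta_{i,j}=\beta_{4-i,9-j}$ and the identity $\sum_{i,j}(-1)^i\beta_{i,j}t^j = (1-t)^4\mathrm{Hilb}(A)$. The coefficient of $t^4$ on the right-hand side is $-3$, which together with $\beta_{1,4}=\beta_{4,4}=0$ gives $\beta_{3,4}-\beta_{2,4}=3$; since duality identifies $\beta_{3,4}=\beta_{1,5}$, we obtain $\beta_{1,5}=3+\beta_{2,4}\ge 3$, and combined with $\beta_{1,5}\le 3$ this forces $\beta_{2,4}=0$ and $\beta_{1,5}=\beta_{3,4}=3$. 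The remaining entries follow formally: the coefficient of $t^3$ yields $\beta_{2,3}=8$, while duality gives $\beta_{3,7}=6$ and $\beta_{2,6}=8$, with all other Betti numbers zero. The subtle point is that the analysis of $S/J$ alone only yields $\beta_{1,5}\le 3$; turning this into equality (and simultaneously forcing $\beta_{2,4}=0$) requires feeding the bound into Gorenstein duality.
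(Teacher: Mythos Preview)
Your proof is correct and takes a genuinely different route from the paper's.

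The paper never uses the SLP result it has just established; instead it bounds the Hilbert function of $S/J_2$ via Macaulay's inequality, then rules out the extremal case $b_{2,3}=9$ by invoking Gotzmann's persistence theorem (which forces $c\ge 6$) and confronting this with further Macaulay bounds on $S/J_3$ and $S/J_4$ (which force $c\le 3$). A second round of Macaulay-type analysis eliminates $f=1$. Your argument replaces all of this with the single observation that a strong Lefschetz element for $A$ makes the Hilbert function of $S/J$ weakly decreasing from degree~$2$ onward, via the short exact sequence for multiplication by~$\ell$. This immediately gives $h^{S/J}_3=h^{S/J}_4=4$ and $h^{S/J}_5\le 4$, hence $\beta_{1,3}=\beta_{1,4}=0$ and $\beta_{1,5}\le 3$, after which Gorenstein duality and the Hilbert-series identity close the table formally.

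What your approach buys is brevity and conceptual clarity: it exploits the preceding SLP proposition rather than leaving it idle, avoids Gotzmann entirely, and eliminates the case analysis. What the paper's approach buys is independence from the Lefschetz property: its argument would still go through for any (hypothetical) AG algebra with this Hilbert function even without knowing SLP, relying only on the combinatorics of Macaulay growth. One small point worth making explicit in your write-up is why $\beta_{1,6}=0$ (equivalently $\beta_{3,3}=0$): this follows from $I$ containing no linear forms, so the minimal free resolution has $\beta_{i,j}=0$ for $j\le i+1$ when $i\ge 1$, but it is used tacitly when you assert ``all other Betti numbers zero.''
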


\begin{proof}
	We know that the Betti table of a Gorenstein algebra is symmetric, and since $h_2(A)=4$, the ideal $I$ has 6 quadrics, therefore $b_{12}:=b_{1,2}=6=b_{37}$. To avoid many indices we write the Betti table as below (we ommit the row of total dimensions for simplicity).
		\begin{center}
		{\scriptsize \begin{verbatim}
				
				+--------------+
				|    0 1  2 3 4|
				| 0: 1 .  . . .|
				| 1: . 6  b c .|
				| 2: . d  e f .|
				| 3: . f  e d .|
				| 4: . c  b 6 .|
				| 5: . .  . . 1|
				+--------------+
			\end{verbatim}
		}
	\end{center}
Denote by $I_j$ the degree $j$ component of the ideal $I$. The Hilbert number $h_j(A)$ is given by $h_j(A)=\binom{3+j}{j}-\dim_{\KK}(I_j)$. For $i=3$ and $i=4$ we have,
$h_3(A)=20-6\cdot 4-d+b=4$ and $h_4(A)=35-6\cdot 10-4d-f+4b+e-c=4$. This gives the relations \begin{equation}\label{bdrel}
	b=d+8
\end{equation}
 and \begin{equation}\label{cefrel}
 	c-e+f=3
 \end{equation}
Consider now the algebra $A=S/J_2$ where $J_2$ is the ideal generated by the quadrics of $I$. Then $h_i(S/J_2)=h_i(A)$ for $i\le 2$ and the first two rows of the Betti tables of $A$ and $S/J_2$ coincide. Since $h_2(S/J_2)=4=\binom{3}{2}+\binom{1}{1}$, by Theorem \ref{Macaulay},  $h_3(S/J_2)\le h_2^{<2>}(S/J_2)=\binom{4}{3}+\binom{2}{2}=5$. On the other hand, $h_3(S/J_2)=20-6\cdot 4+b\le 5$ and $b\le 9$. Together with Equation 1, we get $b\in \{8,9\}$. We want to prove that the case $b=9$ cannot occur. 

Suppose $b=9$, i.e. $h_3(S/J_2)=h_2^{<2>}(S/J_2)$. By Theorem \ref{Gotzman}, $h_4(S/J_2)=\binom{3+2}{2+2}+\binom{1+2-1}{2+2-1}=5$, but $h_4(S/J_2)=35-6\cdot 10+9\cdot4+b_{24}(S/J_2)-c$. Therefore $c= b_{24}(S/J_2)+6\ge 6$.

We study now the Betti tables of $S/J_3$ and $S/J_4$ where $J_3$ and $J_4$ are the ideals generated by the generators of $I$ of degree at most 3 (respectively 4), and we prove that $c\in\{2,3\}$ which contradicts the inequality $c\ge 6$ in the case $b=9$. 

For $j=3,4$ the first $j$ rows of the Betti table of $S/J_j$ coincide with those of $A$, and $h_i(S/J_j)=h_i(A)$ for $i\leq j$. Then, $h_3(S/J_3)=4=\binom{4}{3}$ and by Macaulay $$h_3^{<3>}=5\ge h_4(S/J_3)=35-60-4d+4b+e-c=7+e-c.$$ Therefore, $c\ge 2+e\ge 2$. By Equation \ref{cefrel}, we get $f\leq 1$. On the other hand, 
$h_4(S/J_4)=4=\binom{4}{4}+\binom{3}{3}+\binom{2}{2}+\binom{1}{1}$, and again by Macaulay, $$h_4^{<4>}(S/J_4) =4\ge h_5(S/J_4)=56-120-10d-4f+10b+4e+e-4c-f.$$ Using Equations \ref{bdrel} and \ref{cefrel}, we get $c\leq 3$, a contradiction. Hence, $b=8$ and $d=0$.  

We have now, $b=8$, $c\in\{2,3\}$, $d=0$, $e=f+c-3$ and $f\in\{0,1\}$. It is enough to prove that $f=1$ cannot occur. 

Since $d=0$, the ideal $I$ has no degree three generators. This implies that $S/J_2$ and $A$ share the first three rows of the Betti table, and that $h_4(S/J_2)=35-60+32+e-c$. If $f=1$, $e-c=-2$, and $h_4(S/J_2)=5=\binom{5}{4}$, then $$h_4^{<4>}(S/J_2)=6\ge h_5(S/J_2)=56-120+80+4e+b_{25}(S/J_2)-4c-1,$$ which gives $b_{25}(S/J_2)\le -1$, a contradiction. The remaining case is the desired case which occurs by a computation using Macaulay2. 
\end{proof}
\begin{exm}
	The ideal $I=(x_1x_3-x_2x_4,x_2^2,x_2x_3,x_3^2,x_3x_4,x_4^2,x_1^4x_2,x_1^4x_4,x_1^5)$ defines an AG algebra of $H$-vector $(1,4,4,4,4,1)$ and Betti table as in Theorem \ref{k4thm}.
\end{exm}
\section{Complete intersection}
In the previous section, we saw that certain Hilbert functions lead to strong constraints on the possible Betti table for an AG ring. Hence it is natural to ask if there are other conditions which strongly constrain the Betti table, and in this section, we consider the case where $I$ is a CI. We prove that given codimension $c=4$ and the regularity $r=5$ of an AG algebra, the complete intersection condition fixes the Hilbert function and the Betti table.

To prove our theorem, we prove first that, for the same codimension and socle degree, there is exactly one Betti table for $A=S/I$ where $I$ is an equigenerated ideal. 

\begin{prop}\label{equig}
	Let $A=S/I$ be an AG algebra with socle degree 5. If $I$ is equigenerated then $H(A)=(1,4,10,10,4,1)$ and the Betti table is given by 
	
		\begin{center}
		{\scriptsize \begin{verbatim}
				
				+-------------------+
				|       0  1  2  3 4|
				|total: 1 10 18 10 1|
				|    0: 1  .  .  . .|
				|    1: .  .  .  . .|
				|    2: . 10  9  . .|
				|    3: .  .  9 10 .|
				|    4: .  .  .  . .|
				|    5: .  .  .  . 1|
				+-------------------+
			\end{verbatim}
		}
	\end{center}
\end{prop}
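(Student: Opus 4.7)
The plan is to first pin down the common degree $d$ of the generators of $I$, then use equigeneration together with the Gorenstein self-duality of the Betti table to compute the resolution.

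First I would rule out every value of $d$ except $d = 3$. If $I$ is equigenerated in degree $d$, then $I_j = 0$ for $j < d$, so $h_j(A) = \binom{j+3}{3}$ for $j < d$. This kills $d \ge 4$ using the Gorenstein symmetry $h_i(A) = h_{5-i}(A)$: for $d = 4$ one gets $h_3(A) = 20 \ne h_2(A) \le 10$; for $d = 5$ one gets $h_4(A) = 35 \ne 4$; and $d \ge 6$ makes $h_5(A) \ge 56 \ne 1$.

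The main obstacle is eliminating $d = 2$. Here I would exploit the Artinian hypothesis: since $\mathrm{ht}(I) = 4$ and $I$ is generated by quadrics, iterated graded prime avoidance (at each stage picking a quadric in $I_2$ outside the finitely many associated primes of the ideal constructed so far, which have height $< 4$) produces a regular sequence $J = (q_1, \ldots, q_4)$ of four quadrics inside $I$. Then $S/J$ is a complete intersection with Hilbert function $(1, 4, 6, 4, 1)$, hence $h_5(S/J) = 0$. The surjection $S/J \twoheadrightarrow S/I = A$ forces $h_5(A) \le 0$, contradicting $h_5(A) = 1$.

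With $d = 3$ forced, the vanishing $I_2 = 0$ gives $h_2(A) = 10$, and Gorenstein symmetry yields $h_3(A) = 10$, producing the Hilbert function $(1, 4, 10, 10, 4, 1)$. For the Betti table, equigeneration in degree 3 forces $b_{1, j} = 0$ for $j \ne 3$ and $b_{1, 3} = \dim_{\KK} I_3 = 10$. The Gorenstein symmetry of the resolution, $b_{i, j} = b_{4 - i, \, 9 - j}$, then gives $b_{3, 6} = 10$ with $b_{3, j} = 0$ for $j \ne 6$. The remaining entries are read off from the Hilbert series identity
$$\Big(\textstyle\sum_i h_i(A)\, t^i\Big)(1 - t)^4 \;=\; \sum_{i, j}(-1)^i b_{i, j}\, t^j,$$
which yields $b_{2, 4} = b_{2, 5} = 9$ and zero elsewhere in column 2, recovering exactly the stated Betti table.
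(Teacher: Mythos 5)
Your proposal is correct, and it departs from the paper's argument at the key step. The paper restricts immediately to the two possible symmetric Betti table shapes (generators all in degree $2$ or all in degree $3$), and eliminates the quadratic case by turning the Hilbert function conditions $h_3=k$, $h_4=4$, $h_5=1$ into an integer linear system in the Betti numbers, whose unique nonnegative solution $(k,a,b,e)=(6,4,2,1)$ would force a complete intersection of four quadrics, whose Koszul resolution has socle degree $4$ --- a contradiction. You instead kill the degree-$2$ case structurally: graded prime avoidance (valid here since $\KK$ has characteristic zero, hence is infinite, and each partial complete intersection $(q_1,\dots,q_i)$ is unmixed of height $i<4=\mathrm{ht}(I)$, so no associated prime can contain the generating space $I_2$) yields a regular sequence of four quadrics $J\subseteq I$, and the surjection $S/J\twoheadrightarrow A$ with $h_5(S/J)=0$ contradicts $h_5(A)=1$. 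Your argument is more conceptual and in fact proves the sharper statement that \emph{any} Artinian quotient of $S$ by a quadric-generated ideal has socle degree at most $4$, at the cost of invoking prime avoidance rather than elementary arithmetic. You are also more explicit than the paper in ruling out generation degree $d\ge 4$ via the symmetry $h_i=h_{5-i}$ (the paper absorbs this into its standing assumption that the $H$-vector is $(1,4,k,k,4,1)$ with $k\le 10$, which already forces $\dim_\KK I_3>0$). The endgame is essentially the same in both proofs: once $d=3$ is forced, $I_2=0$ gives $h_2=10$, symmetry gives $H(A)=(1,4,10,10,4,1)$ and $b_{1,3}=\dim_\KK I_3=10$, duality $b_{i,j}=b_{4-i,9-j}$ fills in column $3$, and the alternating-sum identity $H_A(t)(1-t)^4=\sum_{i,j}(-1)^ib_{i,j}t^j$ (equivalently, the paper's computation $h_4=35-(4\cdot 10-e)=4$) determines column $2$, since in each degree $j$ the only unknown is $b_{2,j}$. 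All steps check out, including $b_{2,4}=b_{2,5}=9$ and the vanishing of the remaining entries.
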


\begin{proof}
	If $I$ is equigenerated, the Betti table has one of the two forms 
		\begin{center}
		{\scriptsize \begin{verbatim}
				
    +-------------+    			      +-------------+
    |    0 1 2 3 4|	    		     |    0 1 2 3 4|
    | 0: 1 . . . .|		    	     | 0: 1 . . . .|
    | 1: . a b . .|			          | 1: . . . . .|
(1)	| 2: . . e . .|		    	(2)  | 2: . d e . .|
    | 3: . . e . .|		    	     | 3: . . e d .|
    | 4: . . b a .|			          | 4: . . . . .|
    | 5: . . . . 1|			          | 5: . . . . 1|
    +-------------+			          +-------------+
			\end{verbatim}
		}
	\end{center}
We prove first that Table (1) cannot occur. Suppose $A$ has Betti table (1). Since the $H$-vector of $A$ is $(1,4,k,k,4,1)$, $k=4,\dots, 10$, then $I$ has $a=10-k$ quadratic generators.We compute $h_i(A)$ for $i=3,4,5$. $h_i(A)=\binom{3+i}{i} - \dim_{\KK}I_i$ where $I_i$ is the $i^{th}$ graded component of the graded ideal $I$. 
For $i=3$, $h_3(A)=20-(4a-b)=k$; for $i=4$, $h_4(A)=35-(10a-4b-e)=4$; and for $i=5$, $h_5(A)=56-(20a-10b-4e-e)=1$. Therefore, $k,a,b,e$ are nonnegative integer solutions to the system 
\[ \begin{cases}
 	k&=10-a\\
 	k&=20-4a+b\\
 	4&=35-10a+4b+e\\
 	1&=56-20a+10b+5e
 \end{cases}\]

The only integer solution of this system is $(k,a,b,e)=(6,4,2,1)$ which gives a complete intersection, but the Betti table of a CI with 4 quadrics is 
	\begin{center}
	{\scriptsize \begin{verbatim}
			
			+----------------+
			|       0 1 2 3 4|
			|total: 1 4 6 4 1|
			|    0: 1 . . . .|
			|    1: . 4 . . .|
			|    2: . . 6 . .|
			|    3: . . . 4 .|
			|    4: . . . . 1|
			+----------------+
		\end{verbatim}
	}
\end{center}
and the socle degree in this case is 4, a contradiction. 

We know now that the Betti table of $A$ is of the form of Table (2). Since $I$ has no quadrics, $H(A)=(1,4,10,10,4,1)$ and therefore $I$ is generated by 10 cubics, so $d=10$. Now since $h_4(A)=35-\dim I_4=35-(4d-e)=4$ then $e=9$ and we get the desired Betti table. 
\end{proof}

We can now prove our second main theorem.
\begin{thm}\label{CIthm}
	Let $A$ be a complete intersection with $H$-vector $(1,4,k,k,4,1)$. Then the Betti table of $A$ is 
		
	\begin{center}
		{\scriptsize \begin{verbatim}
				
			   	+----------------+
			   	|       0 1 2 3 4|
		   		|total: 1 4 6 4 1|
	   			|    0: 1 . . . .|
	(3)			|    1: . 3 . . .|
			   	|    2: . 1 3 . .|
			   	|    3: . . 3 1 .|
			   	|    4: . . . 3 .|
			   	|    5: . . . . 1|
			   	+----------------+
			\end{verbatim}
		}
	\end{center}
In particular, the $H$-vector is $(1,4,7,7,4,1)$.	
\end{thm}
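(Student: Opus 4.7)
The plan is to determine the generator degrees of the complete intersection directly, and then read off both the Hilbert function and the Betti table as consequences.

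First I would observe that for a codimension $4$ complete intersection $A = S/(f_1,f_2,f_3,f_4)$, the socle degree equals $\sum_{i=1}^{4}(d_i - 1)$, where $d_i = \deg f_i$. Setting this equal to $5$ gives $d_1 + d_2 + d_3 + d_4 = 9$, while nondegeneracy of $I$ forces $d_i \geq 2$. Since four integers $\geq 2$ have sum at least $8$, the only way to reach $9$ is to increment one of them by one, giving the multiset of degrees $\{2,2,2,3\}$. Thus $I$ is generated by three quadrics and one cubic.

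With the degrees pinned down, everything is forced. The three quadrics give $h_2(A) = \binom{5}{2} - 3 = 7$, so $k=7$ and $H(A) = (1,4,7,7,4,1)$. For the Betti table I would invoke the classical fact that a codimension $n$ complete intersection in an $n$-variable polynomial ring is minimally resolved by the Koszul complex on its defining regular sequence. Writing out this Koszul complex for the degree sequence $(2,2,2,3)$ and translating each summand $S(-j)$ in homological position $i$ into the entry $b_{i,i+j}$ in row $j$ of the Betti table reproduces Table $(3)$ verbatim; in particular the nonzero entries come from $S(-2)^{3} \oplus S(-3)$ at step $1$, $S(-4)^{3} \oplus S(-5)^{3}$ at step $2$, $S(-6)^{3} \oplus S(-7)$ at step $3$, and $S(-9)$ at step $4$.

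The only real subtlety is verifying that the numerical constraint ``four integers $\geq 2$ summing to $9$'' admits the unique multiset $\{2,2,2,3\}$; once this is in hand the Betti table is determined by the Koszul resolution, with no need for the Macaulay-bound or Gotzmann-persistence arguments that drove the proof of Theorem \ref{k4thm}. One could alternatively frame the same step via Proposition \ref{equig}: since the socle-degree-$5$ equigenerated case forces $10$ generators, a codimension-$4$ CI (which has only $4$) cannot be equigenerated, reinforcing that the generator degrees must split as $3 + 1$.
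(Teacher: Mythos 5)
Your proposal is correct, and it takes a genuinely different route from the paper's own proof. The paper proves Theorem \ref{CIthm} by case analysis on $k$: it rules out $k=4,5$ (at most four quadrics), $k=6$ (via Proposition \ref{equig}, since four quadrics would be equigenerated), and $k=10$ (ten cubics is not a CI), then eliminates $k=9$ and $k=8$ by Hilbert-function computations combined with syzygy bounds, and finally solves for the remaining Betti entries $b$ and $e$ from $h_3(A)=7$ and $h_4(A)=4$. You instead pin down the generator degrees at the outset via the socle-degree formula $s=\sum_i(d_i-1)$ for a codimension-four complete intersection (\cite[Theorem 2.3.11]{BH}), which with $d_i\ge 2$ forces the degree multiset $\{2,2,2,3\}$, and then read off the Hilbert function and Betti table from the (automatically minimal) Koszul resolution. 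This is precisely the simplification the paper itself records in the remark following Example \ref{CImon}, attributed to Anthony Iarrobino --- so your argument is anticipated in the paper, but it is not the proof the paper actually gives. What your route buys is brevity and a conceptual explanation of why $k=7$ is forced, bypassing all the Macaulay-bound bookkeeping; what the paper's route illustrates is how much the Betti-table and Hilbert-function constraints alone determine, in the same spirit as Theorem \ref{k4thm}. One slip to fix: in homological degree $3$ the Koszul summands are $S(-6)\oplus S(-7)^{3}$ --- one triple $2+2+2=6$ and three triples $2+2+3=7$ --- not $S(-6)^{3}\oplus S(-7)$ as you wrote; with the multiplicities corrected your table matches Table (3) exactly, as it must by the Gorenstein symmetry $b_{3,6}=b_{1,3}=1$ and $b_{3,7}=b_{1,2}=3$.
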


\begin{proof}
	Since $A$ is a CI, $I$ has at most four quadratic generators. Then $k$ cannot be 4 or 5. If $k=6$, then $I$ is an equigenerated ideal generated by $4$ quadrics. This is not possible by Proposition \ref{equig}. If $k=10$, then $I$ has ten cubic generators and is not a CI. We are left with the cases $k=7,8,9$. 
	
 	Suppose $k=9$. The ideal $I$ is generated by one quadric ($b_{12}=1$), and three other generators of degree 3, 4 or 5. Since it is not possible to have syzygies on one generator, $b_{23}=b_{34}=0$.  Therefore $h_3(A)=\binom{6}{3}-\dim I_3=20-(4\cdot 1+b_{13})=9$, and $b_{13}=7$ which does not give a CI. 
 	
 	Suppose now that $k=8$. In this case $I$ has two quadratic generators and two generators of degree 3, 4 or 5 (i.e., $b_{13}+b_{14}+b_{15}=2$). There is at most one linear syzygy on two quadrics. Then, $b_{23}$ is either 0 or 1. Therefore, $h_3(A)=20-(4\cdot 2+b_{13}-b_{23})=8$, and $b_{13}=4-b_{23}$ is either 3 or 4; a contradiction since $b_{13}\leq 2$.
 		
 	Hence, we proved that $H(A)=(1,4,7,7,4,1)$, so $b_{12}=3$ and $b_{13}+b_{14}+b_{15}=1$. If $b_{13}=0$, $h_3(A)=20-(4\cdot 3-b_{23})=7$ and $b_{23} =-1$, a contradiction. Therefore, the Betti table of $A$ is of the form \\		
 	\begin{center}
 		{\scriptsize \begin{verbatim}
 				
 				+--------------+
 				|    0 1  2 3 4|
 				| 0: 1 .  . . .|
 				| 1: . 3  b . .|
 				| 2: . 1  e . .|
 				| 3: . .  e 1 .|
 				| 4: . .  b 3 .|
 				| 5: . .  . . 1|
 				+--------------+
 			\end{verbatim}
 		} 
 	\end{center}
 Now $h_3(A)= 20-(12+1-b)=7$ and $b=0$; $h_4(A)=35-(3\cdot 10+1\cdot 4-e)=4$ and $e=3$. We get the desired table which is realizable (see the following example). 	
 \end{proof}

\begin{exm}\label{CImon}
	Let $I=(x_1^2,x_2^2,x_3^2,x_4^3)$. $A=S/I$ is a CI of $H$-vector $H(A)=(1,4,7,7,4,1)$ and Betti table as in Table (3). The Macaulay dual generator is $F=X_1X_2X_3X_4^2$.
\end{exm}
\begin{rmk}
	It was pointed out by Anthony Iarrobino that a part of the proof can be simplified using \cite[Theorem 2.3.11]{BH}: for a CI in codimension 4, if $D=(d_1,d_2,d_3,d_4)$ are the degrees generators of $I$ with $2\le d_1\le d_2 \le d_3 \le d_4$, then $s+4=\Sigma_i d_i$. For the case $(1,4,k,k,4,1)$, we have then only one choice for $D$: $D=(2,2,2,3)$. This forces $k$ to be 7 and gives the second and fourth column of the Betti table. 
\end{rmk}
\begin{rmk}
	Note that for the $H$-vector $(1,4,7,7,4,1)$ it is conjectured in \cite{AS} that there are 9 possible Betti tables, and that SLP holds for all AG algebras with the Betti tables below. 
			\begin{center}
		{\scriptsize \begin{verbatim}
				
				+-----------------+-----------------+-----------------+-----------------+
				|       0 1  2 3 4|       0 1  2 3 4|       0 1  2 3 4|       0 1  2 3 4|
				|total: 1 4  6 4 1|total: 1 6 10 6 1|total: 1 6 10 6 1|total: 1 7 12 7 1|
				|    0: 1 .  . . .|    0: 1 .  . . .|    0: 1 .  . . .|    0: 1 .  . . .|
				|    1: . 3  . . .|    1: . 3  2 . .|    1: . 3  2 . .|    1: . 3  . . .|
				|    2: . 1  3 . .|    2: . 3  3 . .|    2: . 2  4 1 .|    2: . 1  6 3 .|
				|    3: . .  3 1 .|    3: . .  3 3 .|    3: . 1  4 2 .|    3: . 3  6 1 .|
				|    4: . .  . 3 .|    4: . .  2 3 .|    4: . .  . 3 .|    4: . .  . 3 .|
				|    5: . .  . . 1|    5: . .  . . 1|    5: . .  . . 1|    5: . .  . . 1|
				+-----------------+-----------------+-----------------+-----------------+
				|       0 1  2 3 4|       0 1  2 3 4|       0 1  2 3 4|       0 1  2 3 4|
				|total: 1 7 12 7 1|total: 1 9 16 9 1|total: 1 9 16 9 1|total: 1 6 10 6 1|
				|    0: 1 .  . . .|    0: 1 .  . . .|    0: 1 .  . . .|    0: 1 .  . . .|
				|    1: . 3  2 . .|    1: . 3  2 . .|    1: . 3  3 1 .|    1: . 3  3 1 .|
				|    2: . 3  4 . .|    2: . 3  6 3 .|    2: . 4  5 1 .|    2: . 4  7 3 .|
				|    3: . 1  4 1 .|    3: . 3  6 3 .|    3: . 1  5 4 .|    3: . 3  7 4 .|
				|    4: . .  2 3 .|    4: . .  2 3 .|    4: . 1  3 3 .|    4: . 1  3 3 .|
				|    5: . .  . . 1|    5: . .  . . 1|    5: . .  . . 1|    5: . .  . . 1|
				+-----------------+-----------------+-----------------+-----------------+
			\end{verbatim}
		}
	\end{center}
For AG algebras with the table 
		\begin{center}
	{\scriptsize \begin{verbatim}
			
			+-----------------+
			|       0 1  2 3 4|
			|total: 1 8 14 8 1|
			|    0: 1 .  . . .|
			|    1: . 3  1 . .|
			|    2: . 2  6 3 .|
			|    3: . 3  6 2 .|
			|    4: . .  1 3 .|
			|    5: . .  . . 1|
			+-----------------+
		\end{verbatim}
	}
\end{center}
there are examples where SLP holds and other examples where WLP fails. 
\end{rmk}

\section{Open problems}
We conclude this note with some open problems. Let $\Gor(H)$ be the set of all AG with $H$-vector $H$. In \cite{Di} S.J. Diesel showed that  in three variables and over an algebraically closed field the variety $\Gor(H)$ is irreducible.  In four variables, Boij showed in \cite{B} that $\Gor(H)$ can consist of several components. 
\begin{prob}
It would be interesting to investigate the loci in $\Gor(H)$ corresponding to the possible betti tables. For example, if we order Betti tables as a poset with $\beta\ge \beta'$ if $b_{i,j}\ge b'_{i,j}$ for all $i,j$, then for the $H$-vector $(1,4,5,5,4,1)$ there is a strict inclusion of the Betti tables whereas for the $(1,4,6,6,4,1)$ this is not the case. 
\end{prob}

The  Jordan  type  of  a  linear  form  $\ell$ acting  on  an  Artinian  algebra $A$ is the partition describing the Jordan block decomposition of the matrix representing the multiplication by a linear form between the graded pieces of $A_i\stackrel{\cdot \ell}{\longrightarrow}A_{i+1}$.  In \cite{HMMNWW} the authors show that $A$ has SLP if and only if the partition $P_{A,\ell}$ is conjugate to the $H$-vector. In \cite{AS}, by a computation with Macaulay2 we see that the Jordan type of a CI with Betti table as in (3) is $(6,4,4,4,2,2,2)$ for a general linear form, which means that such algebras have SLP. 
\begin{prob}
Show that all codimension 4 CI with socle degree 5 have SLP. 
\end{prob}
\begin{exm}
	The algebra  $A=S/(x_1^2,x_2^2,x_3x_4,x_3^3-x_4^3)$ is a CI. For $\ell=x_1+x_2+x_3+x_4$ the Jordan type is $(6,4,4,4,2,2,2)$ which means that $A$ has SLP. 
\end{exm}

\vskip .05in
\noindent{\bf Acknowledgements} The author would like to thank the referee, Anthony Iarrobino and Hal Schenck for their useful remarks, and Anthony Iarrobino, Pedro Macias Marquez, Maria Evelina Rossi and Jean Vallès for organizing the conference AMS-SMF-EMS special session on Deformation of Artinian algebras and Jordan Type in Grenoble.

\end{document}